\newtheorem{thm}{Theorem}
\newtheorem{lem}[thm]{Lemma}
\theoremstyle{definition}
\theoremstyle{remark}
\newtheorem{rem}[thm]{Remark}
\begin{document}

\title[Infinite Toeplitz matrix]{Fixed point theorem for an infinite Toeplitz matrix}%
\author{Vyacheslav M. Abramov}%
%\address{Monash University, School of Mathematical Sciences, Wellington Road, Clayton Campus, Clayton, Victoria-3800, Australia}%
\address{24 Sagan drive, Cranbourne North, Victoria-3977, Australia}%
\email{vabramov126@gmail.com}%

%\thanks{}%
\subjclass{15B05, 40E05, 40A30, 60C99}%
\keywords{Toeplitz matrix; fixed point theorem; Tauberian theorems; generating functions; combinatorial probability}
%\date{}%
%\dedicatory{}%
%\commby{}%
% ----------------------------------------------------------------
\begin{abstract}
For an infinite Toeplitz matrix $T$ with nonnegative real entries we find the conditions, under which the equation $\boldsymbol{x}=T\boldsymbol{x}$, where $\boldsymbol{x}$ is an infinite vector-column, has a nontrivial bounded positive solution. The problem studied in this paper is associated with the asymptotic behavior of convolution type recurrence relations, and can be applied to different problems arising in the theory of stochastic processes and applied problems from other areas.
\end{abstract}
\maketitle

\section{Introduction} Let $T$ be an infinite Toeplitz matrix with nonnegative real entries
\begin{equation}\label{1}
T=\left(\begin{array}{cccc}t_0 &t_{-1} &t_{-2} &\cdots\\
t_1 &t_0 &t_{-1} &\cdots\\
t_2 &t_1 &t_0 &\cdots\\
\vdots &\vdots &\vdots &\ddots
\end{array}\right).
\end{equation}

The aim of this paper is to find the conditions under which
\begin{equation}\label{2}
\boldsymbol{x}=T\boldsymbol{x}, \quad \boldsymbol{x}=\left(\begin{array}{c}x_0\\ x_1\\ \vdots\end{array}\right)
\end{equation}
has a bounded positive solution. Since the solution $\boldsymbol{x}=\mathbf{0}$, where $\mathbf{0}$ is the infinite-dimensional vector of zeros, is a trivial solution,  we seek the only positive solutions. By positive solutions we mean the solutions $\boldsymbol{x}$ satisfying the properties $x_j\geq0$, $j=0,1\ldots$, and $\sum_{j=0}^{\infty}x_j>0$.

The general matrix equations in the form $\boldsymbol{x}=A\boldsymbol{x}$ or $\boldsymbol{x}=A\boldsymbol{x}+\boldsymbol{b}$, where $A$ is a finite or infinite positive matrix, $\boldsymbol{x}$ is the vector of unknowns and $\boldsymbol{b}$ is a known vector  have been known for a long time. They are widely used for solution of various linear equations by the fixed point method, and the area of their application is wide. They are studied by different mathematical means including functional and numerical analysis (e.g.   \cite{Kelley, Krasnoselskii et al}), while the methods that are typically used for the solution of matrix equations are iterative methods.
The detailed discussion of various iteration methods can be found in \cite{Varga}. A widely known application of the matrix equation $\boldsymbol{x}=A\boldsymbol{x}+\boldsymbol{b}$ in economy is the input-output model or Leontief model. It describes a quantitative economic model for the interdependencies between different sectors of a national economy or different regional economies \cite{Leontief}.

The areas of application of matrix equation \eqref{2}, where the matrix $T$ is specified as an infinite Toeplitz matrix differ from those mentioned above.
The possible areas of application of \eqref{2} include problems from the theory of stochastic processes that are closely related to the earlier studies in \cite{Takacs}. As well, they can include applied problems from other areas that use a similar type of analytic equations.

The problems considered in \cite{Takacs} are based on the convolution type recurrence relations. In particular, they describe the probability problems that appear as an extension of the classic ruin and ballot problems and the problems on fluctuations of sums of random variables. Further applications of the convolution type recurrence relations are known in queueing theory (e.g. \cite{Takacs1}), dams theory (e.g. \cite{Abramov}) and many other areas that also considered in \cite{Takacs}.

Equation \eqref{2} itself with finite or infinite Toeplitz matrix $T$ has not been earlier studied, and the techniques for the study of equation \eqref{2} come from the theory of the convolution type recurrence relations. Asymptotic analysis of those equations uses analytic techniques of generating functions with further application of Abelian or Tauberian theorems in their asymptotic analysis.

For the further discussion, let us recall a theorem in \cite[p. 17]{Takacs} presented here in a slightly reformulated form.

\begin{thm}\label{thm0} Let $\nu_1$, $\nu_2$,\ldots, $\nu_r$,\ldots be mutually independent, and identically distributed random variables taking nonnegative integer values, and $N_r=\sum_{j=1}^{r}\nu_j$. Let $t_j=\mathsf{P}\{\nu_1=j\}$, $j=0,1,\ldots$. If $\mathsf{E}\nu_1<1$, then
\begin{equation}\label{14}
\mathsf{P}\left\{\sup_{1\leq r< \infty}(N_r-r)<k\right\}=x_k,
\end{equation}
where $x_k=0$ for $k<0$, $x_0=1-\mathsf{E}\nu_1$, and $x_k$, $k=1,2,\ldots$, can be found by the following recurrence relation
\begin{equation}\label{5}
x_k=\sum_{j=0}^{k}t_jx_{k-j+1}.
\end{equation}
\end{thm}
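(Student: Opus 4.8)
The plan is to reduce everything to one random walk. Set $\xi_j=\nu_j-1$ and $S_r=N_r-r=\sum_{j=1}^{r}\xi_j$; the $\xi_j$ are i.i.d., integer valued, $\ge-1$, with mean $\mathsf{E}\xi_1=\mathsf{E}\nu_1-1<0$. By the strong law of large numbers $S_r/r\to\mathsf{E}\xi_1<0$ a.s., so $S_r\to-\infty$ a.s., and therefore $M:=\sup_{1\le r<\infty}S_r$ is a.s.\ finite; since also $M\ge S_1=\xi_1\ge-1$, the variable $M$ takes values in $\set{-1,0,1,\dots}$. Hence $x_k=\mathsf{P}\set{M<k}$ is a well-defined nondecreasing sequence with $x_k\to1$, and $x_k=0$ for $k<0$ is immediate. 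Two things then remain: the recursion \eqref{5}, and the value $x_0=1-\mathsf{E}\nu_1$.

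For \eqref{5} I would use a first-step decomposition. Conditionally on $\nu_1=j$, the walk $(S_{r+1}-S_1)_{r\ge0}$ is an independent copy of $(S_r)_{r\ge0}$, so, in distribution, $M=(j-1)+\max(0,M')$ with $M'$ an independent copy of $M$. Since $M'\ge-1$, for $0\le j\le k$ one gets $\mathsf{P}\set{M<k\mid\nu_1=j}=\mathsf{P}\set{\max(0,M')\le k-j}=\mathsf{P}\set{M'\le k-j}=x_{k-j+1}$, while for $j>k$ this conditional probability vanishes because $\max(0,M')\ge0>k-j$. Averaging over $\nu_1$ yields $x_k=\sum_{j=0}^{k}t_jx_{k-j+1}$ for all $k\ge0$; the instance $k=0$ is $x_0=t_0x_1$. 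Because $\mathsf{E}\nu_1<1$ forces $t_0\ge1-\mathsf{E}\nu_1>0$, this last identity together with the value of $x_0$ determines $x_1$, after which \eqref{5} produces $x_2,x_3,\dots$; note that \eqref{5} by itself leaves a one-parameter family of solutions, so the side condition $x_0=1-\mathsf{E}\nu_1$, equivalently the normalization $x_k\to1$, is exactly what selects \eqref{14}.

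The remaining step, and the only genuinely nontrivial one, is to identify $x_0=\mathsf{P}\set{M=-1}=\mathsf{P}\set{N_r<r\text{ for all }r\ge1}$. Here I would invoke the cycle lemma, the combinatorial core of the ballot-type results in \cite{Takacs}: for fixed $n$ the vector $(\nu_1,\dots,\nu_n)$ is cyclically interchangeable, and applying the cycle lemma to the integers $1-\nu_i\le1$ shows that exactly $(n-N_n)^+$ of the $n$ cyclic shifts satisfy $N_j<j$ for every $j=1,\dots,n$. Hence $\mathsf{P}\set{N_r<r,\ 1\le r\le n}=\tfrac1n\,\mathsf{E}\big[(n-N_n)^+\big]$. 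Letting $n\to\infty$, the events on the left decrease to $\set{N_r<r\ \forall r\ge1}$, while $\tfrac1n(n-N_n)^+=(1-N_n/n)^+\to1-\mathsf{E}\nu_1$ a.s.\ by the strong law; the integrands being bounded by $1$, bounded convergence gives $x_0=1-\mathsf{E}\nu_1$, as required.

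The delicate point of the whole argument is this cycle-lemma step: getting the count of good cyclic shifts right, and then justifying the interchange of the limit in $n$ with the expectation. Everything else---finiteness of $M$, the monotone passage to $\set{N_r<r\ \forall r}$, and the first-step decomposition behind \eqref{5}---is routine. One could instead compute $\mathsf{P}\set{M=-1}$ from the weak ascending ladder structure of the skip-free-to-the-left walk $S_r$ via Wiener--Hopf factorization, but the ballot-theorem route is shorter and in the spirit of the generating-function methods used in what follows.
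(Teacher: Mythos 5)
Your proof is correct and complete: the first-step decomposition giving \eqref{5} is the same total-probability argument the paper uses in its appendix to derive the generalized recurrence \eqref{15}, and the cycle-lemma identification of $x_0=1-\mathsf{E}\nu_1$ is exactly Tak\'acs's classical ballot-theorem route. Note that the paper itself offers no proof of Theorem \ref{thm0} (it is recalled from \cite{Takacs}), so your write-up is in effect a self-contained reconstruction of the cited argument rather than a genuinely different approach.
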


The generating function of \eqref{5} has the presentation
\begin{equation}\label{6}
\sum_{k=0}^{\infty}x_kz^k=\frac{x_0\tau_0(z)}{\tau_0(z)-z}, \quad -1<z<1.
\end{equation}
where
\[
\tau_0(z)=\sum_{k=0}^{\infty}t_kz^k, \quad -1\leq z\leq 1.
\]

Notice that recurrence relation \eqref{5} can be presented in the form of the following matrix equation
\begin{equation}\label{3}
\boldsymbol{x}=T\tilde{\boldsymbol{x}},
\end{equation}
where
\begin{equation}\label{4}
T=\left(\begin{array}{cccccc}t_0  &0 &0 &0 &\cdots\\
t_1 &t_0  &0 &0 &\cdots\\
t_2 &t_1 &t_0  &0 &\cdots\\
\vdots &\vdots &\vdots &\vdots  &\ddots
\end{array}\right),
\end{equation}
and
\[
\boldsymbol{x}=\left(\begin{array}{c}x_0\\ x_1\\ \vdots\end{array}\right), \quad \tilde{\boldsymbol{x}}=\left(\begin{array}{c}x_1\\ x_2\\ \vdots\end{array}\right).
\]
However, if after an appropriate change of variables we rewrite \eqref{5} in the form:
\begin{eqnarray}
x_{-1}&=&t_{-1}x_0=1-\mathsf{E}\nu_1,\label{16}\\
x_k&=&\sum_{j=-1}^{k}t_{j}x_{k-j}, \quad k=0, 1, \ldots,\label{20}
\end{eqnarray}
then \eqref{20} can be represented as the equation $\boldsymbol{x}=T\boldsymbol{x}$, where
\begin{equation}\label{7}
T=\left(\begin{array}{cccccc}t_0 &t_{-1} &0 &0 &0 &\cdots\\
t_1 &t_0 &t_{-1} &0 &0 &\cdots\\
t_2 &t_1 &t_0 &t_{-1} &0 &\cdots\\
\vdots &\vdots &\vdots &\vdots &\vdots &\ddots
\end{array}\right), \quad \boldsymbol{x}=\left(\begin{array}{c}x_0\\ x_1\\ \vdots\end{array}\right),
\end{equation}
with boundary condition \eqref{16}.

To motivate a more general equation \eqref{2} than that specified with the particular matrix $T$ given by \eqref{7}, let us consider the following elementary extension of Theorem \ref{thm0}.
\begin{thm}\label{thm2}
Let $\nu_1$, $\nu_2$,\ldots, $\nu_r$,\ldots be mutually independent, and identically distributed random variables taking nonnegative integer values.
Denote $N_r=\sum_{j=1}^{r}\nu_j$.
If $\mathsf{E}\nu_1<n$, then
\begin{equation*}
\mathsf{P}\left\{\sup_{1\leq r<\infty}(N_{r}-nr)\leq k~|~\sup_{1\leq r<\infty}(N_{r}-nr)\geq0\right\}=x_{k}, \ k=0, 1,\ldots,
\end{equation*}
where $x_k$ satisfies the following recurrence relation
\begin{equation}\label{13}
x_{k}=\sum_{j=0}^{k+n}t_{j-n}x_{k+n-j}, 
\end{equation}
where $t_{j-n}=\mathsf{P}\{\nu_1=j\}$
\end{thm}
The proof of Theorem \ref{thm2} is given in the appendix.

\begin{rem}
The formulated theorem does not provide explicit values of $x_0$, $x_1$,\ldots, $x_{n-1}$, from which recurrence relation \eqref{13} starts. In order to have all these values, we are to present the expression for the unconditional distribution $\mathsf{P}\big\{\sup_{1\leq r<\infty}(N_{r}-nr)\leq k\big\}$. The derivation of it leads to the expressions having the complicated nature, and these unnecessary details lie out of the scope of the present paper.
\end{rem}

The expanded form of the recurrence relation \eqref{13}, which is more convenient for our purpose, is

\begin{equation}\label{15}
\begin{array}{lllllllll}
x_0 &= &t_0x_0 &+t_{-1}x_1 &+\ldots &+t_{-n}x_{n}, & & &\\
x_1 &= &t_1x_0 &+t_0x_1 &+t_{-1}x_2 &+\ldots &+t_{-n}x_{n+1}, & &\\
\vdots & &\vdots &\vdots &\vdots &\vdots &\vdots &\ddots &\\
\end{array}
\end{equation}

While for this specific problem the values $x_k$, $k=0,1,\ldots$ satisfy the conditions: $x_0\leq x_1\leq\ldots$, $\lim_{k\to\infty}x_k=1$, and the sequence $t_j$, $j=-n, -n+1,\ldots$ satisfies the property $t_{-n}+t_{-n+1}+\ldots=1$, in our further study of recurrence relations \eqref{15}, the positive values $x_0$, $x_1$,\ldots, $x_{n-1}$ can be chosen with a higher freedom, and $t_{-n}+t_{-n+1}+\ldots$ is not necessarily equal to 1.

The system of equations \eqref{15} can be represented in the form of equation \eqref{2}, where the matrix $T$ takes the form
\begin{equation}\label{8}
T=\left(\begin{array}{cccccccc}t_0 &t_{-1} &\cdots &t_{-n} &0 &0 &0 &\cdots\\
t_1 &t_0 &t_{-1} &\cdots &t_{-n} &0 &0 &\cdots\\
t_2 &t_1 &t_0 &t_{-1} &\cdots &t_{-n} &0 &\cdots\\
\vdots &\vdots &\vdots &\vdots &\vdots &\vdots &\vdots &\ddots
\end{array}\right), \quad t_{-n}>0,
\end{equation}
and the study of this equation is central in the paper.

The plan of our study is as follows. Let $n=\max\{j: t_{-j}>0\}$. In Section \ref{S2}, we formulate the main results. In Section \ref{S3}, we derive the explicit representations for the generating function for equation \eqref{2} in the case $n=1$ and then in the case of arbitrary fixed $n$. As well, we discuss the existence of a positive solution of equation \eqref{2} under the assumptions on the entries of the matrix $T$.
In Section \ref{S4}, we prove the main results of this paper.

\section{Main results}\label{S2}

The theorem below assumes that
$n=\max\{j: t_{-j}>0\}<\infty.$
Under this assumption there are infinitely many positive solutions of equation \eqref{2}. However, if the first $n$ positive elements $x_0$, $x_1$,\ldots, $x_{n-1}$ of the vector $\boldsymbol{x}$ are fixed, then the recurrence relations provide a unique solution of equation \eqref{2}.
Denote
\[
\tau_{-n}(z)=\sum_{k=0}^{\infty}t_{k-n}z^k, \quad -1\leq z\leq1.
\]

\begin{thm}\label{thm1} Assume that $n=\max\{j: t_{-j}>0\}<\infty$,
and
\begin{equation}\label{24}
\frac{\mathrm{d}}{\mathrm{d}z}\sqrt[n]{\tau_{-n}(z)} \quad \text{increases.}
\end{equation}

\begin{enumerate}
\item [(i)] If $\sum_{k=0}^{\infty}t_{k-n}>1$, then all positive solutions are bounded, and \[\lim_{k\to\infty}x_k=0.\]
\item [(ii)] If $\sum_{k=0}^{\infty}t_{k-n}=1$, then all positive solutions are bounded if and only if
\begin{equation}\label{19}
\sum_{k=1}^{\infty}kt_{k-n}<n.
\end{equation}
In the case $n=1$, if $\sum_{j=1}^{\infty}jt_{j-1}<1$, then  it satisfies the property
\begin{equation}\label{21}
\lim_{k\to\infty}x_k=\frac{x_0t_{-1}}{1-\sum_{j=1}^{\infty}jt_{j-1}}.
\end{equation}
\item [(iii)] If $\sum_{k=0}^{\infty}t_{k-n}<1$, then any positive solution is unbounded.
\end{enumerate}
\end{thm}

\begin{rem}
Here we provide an elementary example for which \eqref{24} is satisfied. Let $t_{k-n}=\mathrm{e}^{-a}a^k/k!$, where $a$ is a positive constant. Then $\tau_{-n}(z)=\mathrm{e}^{a(z-1)}$, $\sqrt[n]{\tau_{-n}(z)}=\mathrm{e}^{\frac{a}{n}(z-1)}$, and
\[
%\sqrt[n]{\tau_{-n}(z)}&=&\mathrm{e}^{\frac{1}{n}(z-1)},\\
\frac{\mathrm{d}}{\mathrm{d}z}\sqrt[n]{\tau_{-n}(z)}=\frac{a}{n}\mathrm{e}^{\frac{a}{n}(z-1)} \quad \text{increases}.
\]
Apparently that if in the aforementioned example we assume that $a$ is a non-decreasing function of $z$, then \eqref{24} will be satisfied. If $a$ is not a non-decreasing function of $z$, then \eqref{24} can be violated.
\end{rem}

\begin{rem}In the case when $n=\max\{j: t_{-j}>0\}$ does not exist, we set $n=\infty$. Then the corresponding results can be reformulated as asymptotic theorems for $n\to\infty$, and in computation of $T\boldsymbol{x}$ we are to use the required convergence theorems. Then condition \eqref{19} and \eqref{24}
will be transformed as follows.

Instead of condition \eqref{19} we shall require that
\[
\lim_{n\to\infty}\frac{1}{n}\sum_{k=1}^{\infty}kt_{k-n}<1.
\]
Instead of \eqref{24}, we shall assume that for all large $n$
\[
\frac{\mathrm{d}}{\mathrm{d}z}\sqrt[n]{\tau_{-n}(z)}
\]
are increasing functions. Note that a slightly stronger assumption than \eqref{24} is the requirement that
\[
\frac{\mathrm{d}}{\mathrm{d}z}\log{\tau_{-n}(z)}
\]
increases.
\end{rem}

\section{Case studies of equation \eqref{2}}\label{S3}

\subsection{The case when $T$ is presented by \eqref{7}}\label{S3.1}

In the particular case when $T$ is presented by \eqref{7} we have the recurrence relations are defined by \eqref{20}. To make further derivations clearer, we present them in the expanded form:
\begin{equation}\label{9}
\begin{array}{lllllll}
x_0 &= &t_0x_0 &+t_{-1}x_1,& & & \\
x_1 &= &t_1x_0 &+t_0x_1 &+t_{-1}x_2,& &\\
\vdots & &\vdots &\vdots &\vdots &\ddots &
\end{array}
\end{equation}
Using generating functions and combining the terms of \eqref{17} by columns, we obtain
\begin{equation}\label{10}
\begin{aligned}
\chi_{-1}(z)=\sum_{k=0}^\infty x_kz^k&=z^0(t_0x_0+t_1x_0z+t_2x_0z^2+\ldots)\\
+&z^0(t_{-1}x_1+t_0x_1z+t_1x_1z^2+\ldots)\\
+&z^1(t_{-1}x_2+t_0x_2z+t_1x_2z^2+\ldots)\\
+&z^2(t_{-1}x_3+t_0x_3z+t_1x_3z^2+\ldots)\\
+&\ldots\\
&=\frac{1}{z}\big(\tau_{-1}(z)\chi_{-1}(z)-t_{-1}x_0\big), \quad -1\leq z\leq 1.
\end{aligned}
\end{equation}
From \eqref{10} we obtain
\begin{equation}\label{17}
\chi_{-1}(z)=\frac{t_{-1}x_0}{\tau_{-1}(z)-z}, \quad -1<z<1.
\end{equation}

As in \eqref{6}, the generating function $\chi_{-1}(z)$ depends on the choice of $x_0$. The subindex $(-1)$ in the functions $\chi_{-1}(z)$ and $\tau_{-1}(z)$ is $\max\{j: t_{-j}>0\}$.

\subsection{The case when $T$ is presented by \eqref{8}}\label{S.2}

In the case when the system of equations is presented by \eqref{8}, the system of recurrence relations is as follows:
\begin{equation}\label{11}
\begin{array}{lllllllll}
x_0 &= &t_0x_0 &+t_{-1}x_1 &+\ldots &+t_{-n}x_n, & & &\\
x_1 &= &t_1x_0 &+t_0x_1 &+t_{-1}x_2 &+\ldots &+t_{-n}x_{n+1}, & &\\
\vdots & &\vdots &\vdots &\vdots &\vdots &\vdots &\ddots &\\
\end{array}
\end{equation}

Similarly to that given before, we are to derive the expression for the generating function $\chi_{-n}(z)=\sum_{k=0}^{\infty}x_kz^k$, where subindex $(-n)$ of the function $\chi_{-n}(z)$ is $\max\{j: t_{-j}>0\}$. The derivation of $\chi_{-n}(z)$ is provided by the same scheme as that in \eqref{10}. The expression for $\chi_{-n}(z)$ is
\begin{equation}\label{18}
\chi_{-n}(z)=\frac{\sum_{k=0}^{n-1}x_k\sum_{j=k+1}^{n}t_{-j}z^{n-j+k}}{\tau_{-n}(z)-z^n}, \quad -1<z<1.
\end{equation}

\subsection{Existence of a positive solution of equation \eqref{2} under different assumptions on the matrix $T$}\label{S3.3}
The generating function $\chi_{-n}(z)$ in \eqref{18} is expressed via arbitrary chosen $n$ positive parameters $x_0$, $x_1$,\ldots, $x_{n-1}$ such that a solution of \eqref{2} is positive.

We now consider the two cases $\sum_{k=0}^{\infty}t_{k-n}\leq1$ and $\sum_{k=0}^{\infty}t_{k-n}>1$.

We demonstrate first that in the case $\sum_{k=0}^{\infty}t_{k-n}\leq1$, positive parameters $x_0$, $x_1$,\ldots, $x_{n-1}$ guaranteeing a positive solution of equation \eqref{2} always exist.

Indeed, setting $x_0=x_1=\ldots=x_{n-1}$, we obtain
\begin{equation}\label{26}
x_n=\frac{(1-t_0-t_{-1}-\ldots-t_{-n+1})x_{n-1}}{t_{-n}}\geq x_{n-1}.
\end{equation}
Then,
\[
\begin{aligned}
x_{n+1}&=\frac{(1-t_1-t_0-\ldots-t_{-n+2})x_{n-1}-t_{-n+1}x_n}{t_{-n}}\\
&\geq\frac{(1-t_1-t_0-\ldots-t_{-n+1})x_{n-1}}{t_{-n}}\\
&= x_n.
\end{aligned}
\]
This procedure continues, and by induction we obtain $x_{n-1}\leq x_{n}\leq\ldots$. So, the sequence $x_k$, $k=0,1,\ldots$ is monotone increasing.
Note that in the case $n=1$, due to the same derivation as above, if $\sum_{k=0}^{\infty}t_{k-1}\leq1$, then  the sequence $x_k$, $k=0,1,\ldots$ is always monotone increasing.

Within the same case $\sum_{k=0}^{\infty}t_{k-n}\leq1$, assume now that $x_0$, $x_1$,\ldots, $x_{n-1}$ are chosen in some free way under which the solution of \eqref{2} is positive.
Let $x_{m_0}$, $0\leq m_0\leq n-1$, be a largest among $x_0$, $x_1$,\ldots, $x_{n-1}$. Then among the values $x_n$, $x_{n+1}$,\ldots, $x_{n+m_0}$ there is a value that is not smaller than $x_{m_0}$. Indeed, if we assume that $x_n< x_{m_0}$, $x_{n+1}< x_{m_0}$,\ldots, $x_{n+m_0-1}< x_{m_0}$, then for $x_{n+m_0}$ we obtain
\[
\begin{aligned}
x_{n+m_0}&=\frac{x_{m_0}(1-t_0)-\sum_{\substack{0\leq j\leq n+m_0-1\\ j\neq m_0}}x_jt_{m_0-j}}{t_{-n}}\\
&>\frac{x_{m_0}\left(1-\sum_{j=0}^{n+m_0-1}t_{m_0-j}\right)}{t_{-n}}\\
&> x_{m_0}.
\end{aligned}
\]
Similarly, if $m_1=\{\min n: x_n\geq x_{m_0}\}$, then among $x_{m_1}$, $x_{m_1+1}$,\ldots, $x_{m_1+n}$ there exists the value that is not smaller than $x_{m_1}$ that is denoted by $x_{m_2}$. This procedure can be continued, and one finds the limit $x^*=\lim_{i\to\infty}x_{m_i}$, that is $\limsup_{k\to\infty}x_k$. The conditions under which this upper limit is finite follows from Theorem \ref{thm1}, which is proved in the next section. Note that $m_{i+1}-m_i\leq n$ for any $i\geq1$.

 In the case when $\sum_{k=0}^{\infty}t_{k-n}>1$ a positive solution of equation \eqref{2} generally does not exist. For instance, if $t_0>1$, then from the first equation of \eqref{11} we obtain
\[
t_{-1}x_1+\ldots+t_{-n}x_n<0,
\]
that means that at least one of $x_1$, $x_2$,\ldots, $x_n$ must be negative.

Below we demonstrate a particular case of the matrix $T$ under which a positive solution of \eqref{2} does exist.

Indeed, assume that $\sum_{k=1}^{\infty}t_{k-n}<1$, and as earlier set $x_0=x_1=\ldots=x_{n-1}$, $x_0>0$. If $\sum_{k=0}^{\infty}t_{k-n}>1$, we set $t_{-n}^\prime=1-\sum_{k=1}^{\infty}t_{k-n}$, and denote $t_{-n}=ct_{-n}^\prime$, $c>1$.

Now consider two equations. The first original equation $\boldsymbol{x}=T\boldsymbol{x}$, in which $\sum_{k=0}^{\infty}t_{k-n}>1$, and the second one $\boldsymbol{u}=T^\prime\boldsymbol{u}$ in which $\sum_{k=1}^{\infty}t_{k-n}+t_{k-n}^\prime=1$, and $\boldsymbol{u}=\left(\begin{matrix}u_0\\ u_1\\ \vdots\end{matrix}\right).$ That is, the elements $t_j$, $j=-n+1, j=-n+2,\ldots$ in both matrices are the same, but the element $t_{-n}$ in the matrix $T$ and the corresponding element $t_{-n}^\prime$ in the matrix $T^\prime$ are distinct. For positive initial values set $u_0=u_1=\ldots=u_{n-1}=x_0=x_1=\ldots=x_{n-1}$. Then, a positive solution of the equation $\boldsymbol{u}=T^\prime\boldsymbol{u}$ exists and not decreasing, i.e. $u_{n-1}\leq u_n\leq\ldots$. From the equations
\begin{eqnarray*}
u_k&=&t_ku_0+t_{k-1}u_1+\ldots+t_0u_k+\ldots+t_{-n+1}u_{k+n-1}+t_{-n}^\prime u_{k+n}\\
x_k&=&t_kx_0+t_{k-1}x_1+\ldots+t_0x_k+\ldots+t_{-n+1}x_{k+n-1}+t_{-n}^\prime (x_{k+n}/c),
\end{eqnarray*}
we obtain a clear dependence between the terms $x_k$, $k=n, n+1\ldots$ and corresponding terms $u_k$, $k=n, n+1\ldots$ through the constant $c$. Since $c>1$, then $x_k<u_k$ and $x_k$ is positive.
Hence a solution of equation \eqref{2} is positive.

With similar arguments, one can prove the existence of a positive solutions under the more general assumption: $\sum_{k=0}^{\infty}t_{k}<1$ and $\sum_{k=0}^{\infty}t_{k-n}>1$. Under these assumption we set $\sum_{k=1}^{n}t_k^\prime=1-\sum_{k=0}^{\infty}t_k$ and $t_{-k}=ct_{-k}^\prime$, where $c>1$ is a unique constant that is found from this system of equations. The following arguments are similar to those given above for the particular case.

\section{Proof of Theorem \ref{thm1}}\label{S4}
\subsection{Lemmas}
The proof of the major statements of the theorem are based on the Tauberian theorem of Hardy and Littlewood \cite{Hardy, HL}, the formulation of which is as follows.

\begin{lem}\label{lem1} Let the series
\[
\sum_{j=0}^{\infty}a_jz^j
\]
converges for $-1<z<1$ and suppose there exists $\gamma>0$ such that
\[
\lim_{z\uparrow1}(1-z)^\gamma\sum_{n=0}^{\infty}a_jz^j=A.
\]
Suppose also that $a_j\geq0$. Then, as $N\to\infty$,
\[
\sum_{j=0}^Na_j\asymp\frac{A}{\Gamma(1+\gamma)}N^\gamma,
\]
where $\Gamma(x)$ is Euler's Gamma-function.
\end{lem}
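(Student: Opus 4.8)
The statement to prove is Lemma~\ref{lem1}, the Hardy--Littlewood Tauberian theorem. Since the excerpt explicitly cites \cite{Hardy, HL} for this result, my plan is to give a self-contained proof following the classical Karamata method, which turns the Abelian hypothesis into an asymptotic for the partial sums using nothing more than the Weierstrass approximation theorem.

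\medskip

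\noindent\textbf{Plan of proof.} First I would normalize: set $s(z)=\sum_{j\ge0}a_jz^j$, so the hypothesis is $(1-z)^\gamma s(z)\to A$ as $z\uparrow1$, with $a_j\ge0$. The first step is a standard reduction showing the partial sums $S_N=\sum_{j=0}^N a_j$ cannot grow faster than $N^\gamma$: since $a_j\ge0$, for $z\in(0,1)$ we have $S_N z^N\le \sum_{j\le N} a_j z^j\le s(z)$, and choosing $z=1-1/N$ gives $S_N\le e\,s(1-1/N)=O(N^\gamma)$. This crude bound is what makes the subsequent Karamata argument legitimate (it controls tails). The heart of the proof is then the following: introduce the measure $d\mu_z$ on $[0,1]$ assigning mass $a_j(1-z)^\gamma z^j$ to each point $z^j$ (equivalently, work with $\sum_j a_j z^j g(z^j)$ for test functions $g$). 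For the monomial test function $g(t)=t^k$ one computes directly
\[
(1-z)^\gamma\sum_{j=0}^\infty a_j z^j (z^j)^k=(1-z)^\gamma s(z^{k+1})=\left(\frac{1-z}{1-z^{k+1}}\right)^\gamma (1-z^{k+1})^\gamma s(z^{k+1})\To \frac{A}{(k+1)^\gamma}
\]
as $z\uparrow1$, and hence by linearity the same limit relation holds for every polynomial $g$, with limit $\frac{A}{\Gamma(\gamma)}\int_0^1 g(u)u^{\gamma-1}\,du$ after recognizing $(k+1)^{-\gamma}=\frac{1}{\Gamma(\gamma)}\int_0^1 u^k u^{\gamma-1}\,du$.

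\medskip

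\noindent\textbf{From polynomials to the indicator.} The next step is to upgrade from polynomial test functions to the discontinuous function $g_0(t)=\mathbf{1}_{[e^{-1},1]}(t)$ (more precisely $g_0(t)=1/t$ on $[e^{-1},1]$ and $0$ elsewhere, chosen so that $\sum_j a_j z^j g_0(z^j)$ picks out exactly a partial sum $S_{N}$ with $N=N(z)\sim 1/(1-z)$). Here one sandwiches $g_0$ between polynomials $p\le g_0\le q$ with $\int_0^1 (q-p)u^{\gamma-1}\,du$ small — this is possible by Weierstrass approximation after first smoothing the jump, using crucially that the limiting measure $u^{\gamma-1}\,du$ has no atoms and that $a_j\ge0$ so monotonicity of the functional is preserved. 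Passing to the limit $z\uparrow1$ through the polynomial bounds gives
\[
\lim_{z\uparrow1}(1-z)^\gamma\sum_{j:\,z^j\ge e^{-1}} a_j z^j=\frac{A}{\Gamma(\gamma)}\int_{e^{-1}}^1 u^{\gamma-1}\cdot\frac1u\,du=\frac{A}{\Gamma(\gamma)}\cdot\frac{1}{\gamma}=\frac{A}{\Gamma(1+\gamma)}.
\]
Finally, translating the condition $z^j\ge e^{-1}$ into $j\le N$ with $(1-z)\sim 1/N$, and using the crude bound from step one to absorb the discrepancy between $\sum_{z^j\ge e^{-1}}a_j z^j$ and $S_N$ itself, yields $S_N\sim \frac{A}{\Gamma(1+\gamma)}N^\gamma$, which is the assertion (the symbol $\asymp$ in the statement denoting this asymptotic equivalence).

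\medskip

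\noindent\textbf{Main obstacle.} The one genuinely delicate point is the sandwiching step: one must approximate the indicator $g_0$ uniformly from above and below by polynomials in the weighted $L^1$ sense $\int_0^1|q-p|u^{\gamma-1}\,du<\eps$ while maintaining $p\le g_0\le q$ pointwise on $[0,1]$, and simultaneously control the contribution near $t=0$ where the factor $1/t$ in $g_0$ blows up — this is handled by noting $z^j\ge e^{-1}$ forces $j\le 1/(1-z)+O(1)$, so the relevant test function is effectively bounded on the range that matters, after which Weierstrass approximation of a continuous piecewise-linear regularization of $g_0$ applies cleanly. Everything else is bookkeeping with the nonnegativity hypothesis, which is exactly what prevents oscillation and makes the Tauberian passage valid.
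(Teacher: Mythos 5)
The paper does not prove Lemma~\ref{lem1} at all: it is imported verbatim as the Hardy--Littlewood Tauberian theorem with citations to \cite{Hardy, HL}, so there is no ``paper proof'' to match. Your Karamata-style argument is the standard modern route to this result, and its architecture is sound: the a priori bound $S_N z^N\le s(z)$ with $z=1-1/N$ giving $S_N=O(N^\gamma)$, the computation of the limit functional on monomials via $(1-z)^\gamma s(z^{k+1})\to A/(k+1)^\gamma$, the extension to polynomials by linearity, the sandwich of the cut-off function $g_0$ between polynomials (legitimate precisely because $a_j\ge 0$ makes the functional monotone and the limit measure has no atom at the jump), and the translation $z^j\ge e^{-1}\Leftrightarrow j\le N$ with $1-z\sim 1/N$. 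This is a genuinely self-contained and elementary proof, needing only Weierstrass approximation, whereas the paper simply delegates to the literature.

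There is, however, one concrete error you must fix: the limiting measure is wrong. The identity you invoke, $(k+1)^{-\gamma}=\frac{1}{\Gamma(\gamma)}\int_0^1 u^k\,u^{\gamma-1}\,du$, is false for $\gamma\ne 1$, since the right-hand side equals $\frac{1}{\Gamma(\gamma)(k+\gamma)}$. The correct Karamata representation is
\[
\frac{1}{(k+1)^\gamma}=\frac{1}{\Gamma(\gamma)}\int_0^\infty e^{-(k+1)t}t^{\gamma-1}\,dt=\frac{1}{\Gamma(\gamma)}\int_0^1 u^k\Bigl(\log\tfrac1u\Bigr)^{\gamma-1}\,du,
\]
so the limit functional on test functions is $g\mapsto\frac{A}{\Gamma(\gamma)}\int_0^1 g(u)\bigl(\log\frac1u\bigr)^{\gamma-1}\,du$, not $\frac{A}{\Gamma(\gamma)}\int_0^1 g(u)u^{\gamma-1}\,du$. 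Your final evaluation compounds the slip: $\int_{e^{-1}}^1 u^{\gamma-1}\cdot\frac1u\,du=\frac{1-e^{-(\gamma-1)}}{\gamma-1}\ne\frac1\gamma$ in general. With the corrected weight everything closes up exactly as you intend:
\[
\frac{A}{\Gamma(\gamma)}\int_{e^{-1}}^1\frac1u\Bigl(\log\tfrac1u\Bigr)^{\gamma-1}\,du=\frac{A}{\Gamma(\gamma)}\int_0^1 t^{\gamma-1}\,dt=\frac{A}{\Gamma(1+\gamma)},
\]
and the remainder of your argument then delivers $S_N\sim\frac{A}{\Gamma(1+\gamma)}N^\gamma$ as claimed. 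So the route is right and the conclusion is right, but the measure $u^{\gamma-1}\,du$ must be replaced by $\bigl(\log\frac1u\bigr)^{\gamma-1}\,du$ throughout for the polynomial step and the sandwich step to be valid.
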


In addition to this lemma we need one more lemma presented below, where we assume that $\sum_{k=0}^{\infty}t_{k-n}=1$.

\begin{lem}\label{lem3} Let $w$ be a positive value. Consider the equation
\begin{equation}\label{23}
z^n=w\tau_{-n}(z),
\end{equation}
and assume that \eqref{24} is fulfilled.

\begin{enumerate}
\item [$(a_1)$] If $w=1$ and $\gamma=\sum_{k=1}^{\infty}kt_{k-n}\leq n$, then there are no roots of equation \eqref{23} in the interval $(0,1)$.
\item [$(a_2)$] If $w=1$ and $\gamma=\sum_{k=1}^{\infty}kt_{k-n}> n$, then there is a root of equation \eqref{23} in the interval $(0,1)$.
\item [$(a_3)$] If $w>1$, then there are no roots of equation \eqref{23} in the interval $(0,1)$.
\item [$(a_4)$] If $w<1$, then there is a root of equation \eqref{23} in the interval $(0,1)$.
\end{enumerate}
\end{lem}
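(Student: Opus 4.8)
\textbf{Proof plan for Lemma \ref{lem3}.}

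The plan is to study the scalar function $\varphi(z)=z^n-w\tau_{-n}(z)$ on $[0,1]$, using the normalization $\sum_{k=0}^\infty t_{k-n}=1$, which gives $\tau_{-n}(1)=1$ and hence $\varphi(1)=1-w$. I would first record the two boundary values: $\varphi(0)=-w\,t_{-n}<0$ (since $t_{-n}>0$), and $\varphi(1)=1-w$. This already disposes of cases $(a_3)$ and $(a_4)$ in one direction: if $w>1$ then $\varphi(1)<0$, and if $w<1$ then $\varphi(1)>0$, so by the intermediate value theorem a root exists in $(0,1)$ when $w<1$. The real work is to show \emph{uniqueness/non-existence} patterns, i.e.\ that $\varphi$ changes sign at most once on $(0,1)$, which is where assumption \eqref{24} enters.

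The key idea is to divide by $z^n$ (legitimate on $(0,1]$) and rewrite \eqref{23} as $1 = w\,\tau_{-n}(z)/z^n$, or better, to pass to $n$-th roots. Set $g(z)=\sqrt[n]{\tau_{-n}(z)}/z$ for $z\in(0,1]$; then equation \eqref{23} is equivalent to $g(z)=w^{-1/n}$. I claim $g$ is strictly decreasing on $(0,1)$. Write $g(z)=h(z)/z$ with $h(z)=\sqrt[n]{\tau_{-n}(z)}$; then $g'(z)=\big(zh'(z)-h(z)\big)/z^2$, so it suffices to show $\psi(z):=zh'(z)-h(z)<0$ on $(0,1)$. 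Now $\psi(0)=-h(0)=-\sqrt[n]{t_{-n}}<0$, and $\psi'(z)=zh''(z)$; assumption \eqref{24} says exactly that $h'$ increases, i.e.\ $h''\ge0$, so $\psi'\ge0$ and $\psi$ is nondecreasing. A nondecreasing function that starts negative need not stay negative, so I also use $\psi(1)=h'(1)-h(1)=h'(1)-1$; since $h(z)=\sqrt[n]{\tau_{-n}(z)}$ and $\tau_{-n}(1)=1$, a direct computation gives $h'(1)=\tfrac1n\tau_{-n}'(1)=\tfrac1n\sum_{k=1}^\infty k t_{k-n}=\gamma/n$, hence $\psi(1)=\gamma/n-1$. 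Therefore, as $z\uparrow1$, $\psi(z)\to\gamma/n-1$, and because $\psi$ is nondecreasing, $\psi(z)\le\gamma/n-1$ for all $z\in(0,1)$. When $\gamma\le n$ this gives $\psi(z)\le0$ on $(0,1)$ (with strict inequality except possibly at the endpoint), so $g$ is strictly decreasing on $(0,1)$.

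With $g$ strictly decreasing on $(0,1)$, the monotone function $g$ takes the value $w^{-1/n}$ at most once. Evaluate the relevant limits: $g(z)\to+\infty$ as $z\downarrow0$ (numerator tends to $\sqrt[n]{t_{-n}}>0$, denominator to $0$), and $g(1)=\sqrt[n]{\tau_{-n}(1)}/1=1$. Hence the range of $g$ on $(0,1)$ is $(1,\infty)$ when $g$ is strictly decreasing. Now: in case $(a_3)$, $w>1$ gives $w^{-1/n}<1$, which is \emph{not} in the range $(1,\infty)$, so there is no root in $(0,1)$; combined with continuity this is consistent with $\varphi(1)<0$ and $\varphi$ staying negative. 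In case $(a_1)$, $w=1$ gives $w^{-1/n}=1$, attained only in the limit $z\to1$, hence no root strictly inside $(0,1)$. For $(a_2)$, $\gamma>n$: here $\psi(1)=\gamma/n-1>0$, and since $\psi$ is nondecreasing with $\psi(0)<0$, $\psi$ has a sign change, so $g$ is first decreasing then increasing, with $g(1)=1$ a local value; because $g\to+\infty$ at $0$ and $g$ dips below $1$ before returning to $1$ at $z=1$, the value $w^{-1/n}=1$ is attained at some interior point, giving a root in $(0,1)$. For $(a_4)$, $w<1$ was already handled by the intermediate value argument on $\varphi$, and the monotonicity (valid at least in a left-neighborhood of $1$, or via $\gamma\le n$; if $\gamma>n$ one still gets existence directly from $\varphi(0)<0<\varphi(1)$) shows the root lies in $(0,1)$.

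The main obstacle I anticipate is the borderline bookkeeping in case $(a_2)$: once $\psi$ changes sign, $g$ is no longer monotone, and one must argue carefully that the unimodal shape of $g$ (decreasing then increasing, or the behavior of $\varphi$ near $z=1$) forces the level $w^{-1/n}=1$ to be crossed in the open interval rather than only touched at the endpoint — this requires checking that $g$ actually drops strictly below $1$ before $z=1$, equivalently that $\varphi(z)>0$ for $z$ slightly less than $1$, which follows from $\varphi(1)=0$ together with $\varphi'(1)=n-w\tau_{-n}'(1)=n-\gamma<0$. A secondary technical point is justifying differentiability and the interchange of differentiation with the power series for $\tau_{-n}$ near $z=1$, which is standard for power series with nonnegative coefficients summing to $1$ (the one-sided derivative at $1$ may be $+\infty$ if $\gamma=\infty$, but in the cases at hand $\gamma$ is finite or the relevant inequality is strict).
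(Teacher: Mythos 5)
Your treatment of $(a_1)$, $(a_2)$ and $(a_4)$ is correct and is essentially the paper's own argument in different packaging: the paper passes to the $n$-th-root form $z=\sqrt[n]{w\tau_{-n}(z)}$ and exploits the concavity of the difference $z-\sqrt[n]{w\tau_{-n}(z)}$ (which is what \eqref{24} buys) together with its values at $z=0,1$ and its derivative $1-\gamma/n$ at $z=1$; you work instead with the ratio $g(z)=\sqrt[n]{\tau_{-n}(z)}/z$ and the sign of $\psi(z)=zh'(z)-h(z)$, and your $\psi(1)=\gamma/n-1$ is the paper's endpoint derivative with the sign flipped. The two are interchangeable. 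The small point you flag parenthetically in $(a_1)$ — that $g$ could conceivably equal $1$ on a subinterval adjacent to $z=1$ — is genuinely needed and is settled by noting that $g\equiv1$ there would force $\tau_{-n}(z)=z^n$ identically, contradicting $\tau_{-n}(0)=t_{-n}>0$.

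The gap is in $(a_3)$. Your conclusion that $w^{-1/n}<1$ lies outside the range $(1,\infty)$ of $g$ rests on $g$ being strictly decreasing on $(0,1)$, but you proved that only under the extra hypothesis $\gamma\le n$, which is not part of $(a_3)$. When $\gamma>n$, your own analysis shows $\psi$ changes sign, so $g$ decreases to a minimum $g(z^*)<1$ at some interior $z^*$ and then climbs back to $g(1)=1$; consequently every level $w^{-1/n}\in\bigl(g(z^*),1\bigr)$, i.e.\ every $w\in\bigl(1,g(z^*)^{-n}\bigr)$, is attained in $(0,1)$, and \eqref{23} then \emph{does} have a root there. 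A concrete instance with $n=1$: $\tau_{-1}(z)=0.01+0.99z^{100}$ satisfies \eqref{24} and $\tau_{-1}(1)=1$, yet for $w=1.1$ the equation $z=w\tau_{-1}(z)$ has a root near $z\approx0.011$. So $(a_3)$ cannot be established without an additional restriction on $\gamma$ (or on how large $w$ is); be aware that the paper's own proof of $(a_3)$ consists of the bare assertion that $z-\sqrt[n]{w\tau_{-n}(z)}<0$ on $[0,1]$, which fails in the same example, so the defect lies in the lemma as stated rather than only in your write-up. You should either add the hypothesis $\gamma\le n$ to $(a_3)$ (under which your range argument closes the case) or record explicitly that your proof of $(a_3)$ is valid only in that regime.
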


\begin{proof} From \eqref{23} we have the equation $z=\sqrt[n]{w\tau_{-n}(z)}$. The function $\sqrt[n]{w\tau_{-n}(z)}$ is increasing since $\tau_{-n}(z)$ is increasing, and its derivative, according to assumption of the lemma, is increasing as well. Taking into account that $t_{-n}>0$, in cases $(a_1)$ and $(a_2)$ we easily arrive at the required statements, since the difference $z-\sqrt[n]{\tau_{-n}(z)}$ in point $z=0$ is negative and in point $z=1$ is zero. The derivative of this difference in point $z=1$ is equal to $1-(1/n)\tau_{-n}^\prime(1)$. It is nonnegative in case $(a_1)$ and strictly negative in case $(a_2)$. In case $(a_3)$ the required result follows from the fact that under condition \eqref{24} the difference $z-w\sqrt[n]{\tau_{-n}(z)}$ is negative for all $0\leq z\leq 1$. In case $(a_4)$ the result trivially follows, since the differences $z-w\sqrt[n]{\tau_{-n}(z)}$ in points $z=0$ and $z=1$ are of opposite signs.
\end{proof}

\subsection{Proof of the theorem}

Under assumption (i) of the theorem, we have
\begin{equation}\label{25}
\sum_{k=0}^{\infty}t_{k-n}=w\sum_{k=0}^{\infty}t_{k-n}^\prime,
\end{equation}
where $w>1$ and $\sum_{k=0}^{\infty}t_{k-n}^\prime=1$. Then, according to statement $(a_3)$ of Lemma \ref{lem3}, the denominator of the fraction on the right-hand side of \eqref{18} is nonzero for all $z\in[0,1]$ and hence the series $\chi_{-n}(z)$ is continuous in  $[0,1]$. As $z\to1$, we have $\sum_{k=0}^{\infty}x_k=\lim_{z \uparrow1}\chi_{-n}(z)<\infty$. Then
$
\lim_{k\to\infty}x_k=0,
$
and the statement of the theorem under assumption (i) is proved.

Under assumption (ii), Lemma \ref{lem1} is applied with $\gamma=1$. We have
\begin{equation}\label{22}
\lim_{z\uparrow1}(1-z)\chi_{-n}(z)=\lim_{z\uparrow1}(1-z)\frac{\sum_{k=0}^{n-1}x_k\sum_{j=k+1}^{n}t_{-j}z^{n-j+k}}{\tau_{-n}(z)-z^n}.
\end{equation}

If condition \eqref{19} of the theorem is satisfied, then the L'Hospital rule yields
\[
\lim_{z\uparrow1}(1-z)\chi_{-n}(z)=\frac{\sum_{k=0}^{n-1}x_k\sum_{j=k+1}^{n}t_{-j}}{n-\sum_{k=1}^{\infty}kt_{k-n}}.
\]
Then the conditions of Lemma \ref{lem1} are satisfied, and according to that lemma for large $N$ we have
\begin{equation}\label{27}
\sum_{j=0}^Nx_j\asymp\frac{\sum_{k=0}^{n-1}x_k\sum_{j=k+1}^{n}t_{-j}}{n-\sum_{k=1}^{\infty}kt_{k-n}}N.
\end{equation}
Next, it was shown in Section \ref{S3.3} that there is an increasing sequence of indices $m_0$, $m_1$, \ldots such that $\lim_{i\to\infty}x_{m_i}=\limsup_{k\to\infty}x_k$, and for any $i$, $m_{i+1}-m_i\leq n$. This enables us to conclude that $\sum_{j=0}^Nx_{m_j}=O(N)$, and hence
\eqref{27} implies
$
\limsup_{k\to\infty}x_k<\infty.
$

In the particular case $n=1$, the sequence $x_k$, $k=0,1,2,\ldots$ is non-decreasing (see Section \ref{S3.3}), and hence there is the limit of this sequence as $k\to\infty$. This limit is finite, if $\sum_{k=1}^{\infty}kt_{k-1}<1$, and according to Abel's theorem
\[
\lim_{k\to\infty}x_k=\lim_{z\uparrow1}(1-z)\chi_{-1}(z)=\frac{x_0t_{-1}}{1-\sum_{j=1}^{\infty}jt_{j-n}}.
\]
Relation \eqref{21} follows.

If $\sum_{k=1}^{\infty}kt_{k-n}=n$, then the L'Hospital rule yields infinite value in limit. So, under the assumption $\sum_{k=1}^{\infty}kt_{k-n}=n$, the sequence $x_0$, $x_1$,\ldots diverges for any positive initial values of $x_0$, $x_1$,\ldots, $x_{n-1}$. If $\sum_{k=1}^{\infty}kt_{k-n}>n$, then according to statement $(a_2)$ of Lemma \ref{lem3}, the fraction of the right-hand side of \eqref{22} has a pole, and hence the sequence $x_0$, $x_1$,\ldots diverges. Statements (ii) of the theorem are proved.

Under assumption (iii) of the theorem we have \eqref{25}, where $w<1$ and $\sum_{k=0}^{\infty}t_{k-n}^\prime=1$.
Then, according to statement $(a_4)$ of Lemma \ref{lem3}, the fraction of the right-hand side of \eqref{22} has a pole. Hence the sequence $x_0$, $x_1$,\ldots diverges in this case as well. Statement (iii) follows.
The theorem is proved.

\appendix
\section{Proof of Theorem \ref{thm2}}

Let
\begin{equation}\label{12}
\mathsf{P}\left\{\sup_{1\leq r<\infty}(N_{r}-nr)\leq k~|~\sup_{1\leq r<\infty}(N_{r}-nr)\geq0\right\}=x_k.
\end{equation}
Note first that the condition $\mathsf{E}\nu_1<n$ guarantees the existence of the distribution of $\sup_{1\leq r<\infty}(N_{r}-nr)$, and hence the existence of the conditional distribution in \eqref{12}.

Following the arguments similar to those in \cite[p. 17, formula (22)]{Takacs}, by the formula for the total probability for $k\geq0$ we obtain
\[
\begin{aligned}
\mathsf{P}\{N_r&\leq r+k \quad \text{for} \quad r=1,2,\ldots,m+1\}\\
&=\sum_{j=0}^{k+n}t_{j-n}\mathsf{P}\{N_r\leq r+k+n-j \quad \text{for} \quad r=1,2,\ldots,m\},
\end{aligned}
\]
where $t_{j-n}=\mathsf{P}\{\nu_1=j\}$, $j=0,1,\ldots$.

Now, letting $m$ to increase to infinity, we arrive at the required recurrence relation.

\end{document}